\documentclass{article}
\usepackage{amsmath}
\usepackage{amsthm}
\usepackage{amssymb}
\usepackage{verbatim}
\usepackage{graphicx}
\usepackage{float}
\usepackage{geometry}
\usepackage{indentfirst}
\usepackage{tikz-cd}

\newtheorem{theorem}{Theorem}
\newtheorem{lemma}{Lemma}
\newtheorem{proposition}{Proposition}

\title{Gradient Growth for Vlasov-Poisson in background charge distributed by Desingularized Dirac Delta}
\author{Sangwook Tae}

\numberwithin{equation}{section}
\numberwithin{theorem}{section}
\numberwithin{lemma}{section}
\numberwithin{proposition}{section}

\begin{document}
	\maketitle
	\begin{abstract}
		We prove the superlinear gradient growth for the one dimensional Vlasov-Poisson equation on the torus. We show them by first constructing the stationary solution which has a hyperbolic flow (locally). Then we perturb the solution and use the $L^\infty$ stability of the characteristic velocity field to show the superlinear growth. This proof was inspired by the work of Denisov \cite{growth_EE}.
	\end{abstract}
	\section{Introduction}
	\subsection{Main Results}
	Let us consider the one dimensional Vlasov-Poisson equation on the Torus $\mathbb{T}=\mathbb{R}/\mathbb{Z}$:
	\begin{flalign}
		\label{Vlasov}
		\begin{split}
			&\partial_{t}f(x, v, t)+v\cdot\partial_{x}f(x, v, t)-E(x,t)\cdot\partial_{v}f(x, v, t)=0\\
			&E(x, t)=\int_{0}^{1}K(x, y)\left[\rho(y)-\int_{-\infty}^{\infty}f(y, v, t)dv\right]dy.
		\end{split}
	\end{flalign}
	where the kernel $K$ is defined as
	\begin{flalign*}
		K(x, y)=\begin{cases}
			y, & \text{for } 0\leq x< y\\
			y-1, & \text{for } y< x\leq 1
		\end{cases}
	\end{flalign*}
	Here, $f$ is the distribution function for the electron in spacetime and $\rho$. is the background ion density. We assume $\int_{\mathbb{T}\times\mathbb{R}} f(t, x, v)dvdx=1=\int_{\mathbb{T}}\rho(y)dy$ so that the total charge is 0 on $\mathbb{T}$. This condition is needed to avoid contradictions. \\
	
	Equation (\ref{Vlasov}) describes the evolution of the distribution function for electrons under the electric field generated by themselves and background ion \cite{Majda}. The electric field obeys the one dimensional Poisson equation, which implies that the electric field satisfies the formula in the second line of (\ref{Vlasov}).  It can be used to study collisionless plasmas with heavy ions so that the positions of the ions remain fixed. \\
	
	In this paper, we are concerned for the growth of the gradient growth for the distribution function $f$. This topic is directly related to the small scales creations, due to the fact that $f$ is transported along the characteristics, which implies the conservation of $L^p$ norms. There are many results for the gradient growth for the vorticity in two dimensional Euler equations, but it seems that there are not so many results for the Vlasov-Poisson equation. Our result establishes the certain lower bound for the rate of growth for $\Vert\nabla_{x,v} f\Vert_{L^\infty}$ in time.
	
	We state the main theorem as follows:
	\begin{theorem}
		Consider the one dimensional Vlasov Poisson equation (\ref{Vlasov}) with the following density profile for background ion: 
		\begin{flalign*}
			\rho(y)=\rho_{\epsilon}(y)=\begin{cases}
				\frac{1}{\epsilon} & x\in[0, \frac{\epsilon}{2})\cup (1-\frac{\epsilon}{2}, 1]\\
				0 & \text{otherwise}
			\end{cases}
		\end{flalign*}
		which is desingularized version of Dirac delta distribution.
		Then, for any sufficiently small $\epsilon>0$, there exists a smooth initial data $f_0(x, v)$ such that the corresponding solution $f$ exhibits the following superlinear gradient growth in time:
		\begin{flalign*}
			\lim_{T\to\infty}\frac{1}{T^2}\int_0^T \Vert \nabla f(t)\Vert_{L^\infty} dt=\infty.
		\end{flalign*}
	\end{theorem}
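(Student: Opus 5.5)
The plan follows the abstract: build a stationary solution whose characteristic flow has a hyperbolic saddle, perturb it, and use $L^\infty$ stability of the characteristic velocity field to show the perturbed solution still stretches gradients near the saddle.

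\emph{Step 1: the stationary solution.} The background charge $\rho_\epsilon$ is concentrated near $x=0$ (which is also $x=1$). We choose a stationary $f_s$ that is spatially homogeneous away from that region, e.g. $f_s(x,v)=g(v)$ with $\int g=1$. Then $E_s(x)=\int_0^1 K(x,y)[\rho_\epsilon(y)-1]\,dy$, so $\partial_x E_s=\rho_\epsilon-1$ up to sign convention, and $E_s$ is odd about $0$. We check that $g(v)$ is stationary: $E_s\partial_v g$ need not vanish, so instead we take $f_s=G(\tfrac12 v^2+\Phi_s(x))$, where $\Phi_s'=E_s$ is self-consistent. To keep it simple, we choose $G$ supported at energies well above $\max\Phi$, so that the density $\int f_s\,dv$ is nearly constant. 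The potential is then determined by $\rho_\epsilon$ minus a smooth density, via a fixed-point argument that is small in $\epsilon$-independent norms.

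The key feature is that the ion spike makes $x=0$ a maximum of the electron potential energy. Indeed, $-\Phi''=\partial_x E$ has size $\sim 1/\epsilon$ near $0$, because the ions attract the electrons. So the characteristic system $\dot x=v$, $\dot v=-E(x)$ has a hyperbolic fixed point at $(0,0)$ with eigenvalues $\pm\lambda$, $\lambda\sim \epsilon^{-1/2}$. The sign must be checked: if the spike produces a center instead, we use the opposite convention, placing the saddle at $x=\tfrac12$, where $\partial_xE=-1+(\text{electron density})$. Smallness of $\epsilon$ guarantees that a linearization with a definite sign holds on a neighborhood of fixed size.

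\emph{Step 2: the perturbation.} We take $f_0=f_s+\delta h$, where $h$ is smooth, compactly supported and small, and also vanishes on a fixed neighbourhood of the saddle. We then prove $L^\infty$ stability of the field: $\|E(t)-E_s\|_{L^\infty}\lesssim\|\int(f-f_s)dv\|_{L^1}$, because $K$ is bounded. The right side is controlled by $L^1$ conservation of $f-f_s$. This holds because $f$ and $f_s$ solve transport equations with different fields, so the bound requires a Gr\"onwall or energy-Casimir argument. We expect this stability, uniform for all time, to be the main obstacle. The first attempt is an energy-Casimir functional for $f_s$ (in the style of Marchioro--Pulvirenti and the Arnold method), which gives $\|f(t)-f_s\|_{L^1}\lesssim\sqrt\delta$ uniformly in $t$, hence $\|E(t)-E_s\|_{L^\infty}\le C\sqrt\delta$.

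\emph{Step 3: gradient growth.} As in Denisov's argument, a field that is $C\sqrt\delta$-close in $L^\infty$ to a field with a saddle of strength $\lambda$ still produces hyperbolic trapping. Trajectories starting near the stable manifold remain in a box around the saddle for a long time $\tau$. During that time the $x$-direction is compressed by roughly $e^{-\lambda'\tau}$ and the $v$-direction is stretched by roughly $e^{\lambda'\tau}$, with $\lambda'\ge\lambda/2$, up to errors controlled by $\sqrt\delta$. We arrange $h$ so that $f_0$ has two level sets with different values that are transported in opposite directions around the saddle. The flow then draws them toward the stable manifold, and their distance shrinks at least like $e^{-ct}$ or a polynomial rate such as $t^{-3}$. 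Here the mechanism is a boundary-type effect, as in Denisov: the perturbation is only $L^\infty$-small, so we expect the result to be a distance shrinking faster than $t^{-2}$. Since $f$ is constant along characteristics, $\|\nabla f(t)\|_{L^\infty}\ge c/\text{dist}(t)$, which grows faster than $t^2$. Integrating in time then gives $\frac1{T^2}\int_0^T\|\nabla f\|_{L^\infty}dt\to\infty$.
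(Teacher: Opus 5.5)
\paragraph{Main gap: Step 3.}
The only control you have on the perturbed characteristics is $\sup_t\|E(t)-E_s\|_{L^\infty}\le C\sqrt\delta$. Nothing makes $\partial_x(E-E_s)$ small. In the hyperbolic coordinates $(\xi,\eta)$ at the saddle, the $\eta$-separation of two trajectories then obeys $\dot w_\eta=-Aw_\eta+o(1)\,|w|+O(\sqrt\delta)$. This only drives $|w_\eta|$ down to size $O(\sqrt\delta/A)$, and nothing stops the time-dependent field from holding two level curves that far apart. So no rate for $\mathrm{dist}(t)$ follows, whether $e^{-ct}$, $t^{-3}$ or any other. A pointwise bound $\|\nabla f(t)\|_{L^\infty}\gg t^2$ is out of reach, which is why the conclusion is only a time average.

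The missing idea is Denisov's area-counting argument, which the paper uses. Two facts survive an $L^\infty$-small perturbation in the box $B_o=\{|\xi|\le\gamma,\ |\eta|<0.1\gamma\}$ (Lemma~\ref{flow}). Over a time step of length $\lesssim 1/A$, the tube $\{|\eta|<0.1\gamma\}$ is kept. Points with $|\xi|$ close to $\gamma$ are expelled across $|\xi|=\gamma$.

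One places in the box a thin region bounded by a level curve and elongated along the unstable direction. It contains a level curve of a different value spanning the box. Let $S_n$ be the transported region inside $B_o$ at time $t_n$, and $S_n^2$ its part in the edge strips $0.99\gamma\le|\xi|\le\gamma$. Area preservation gives $|S_{n+1}|\le|S_n|-|S_n^2|$, hence $\sum_n|S_n^2|\le|S_0|<\infty$. Your mean-value bound, applied across the strip, gives $|S_n^2|\gtrsim\|\nabla f(t_n)\|_{L^\infty}^{-1}$. The elementary fact that $a_j>0$ and $\sum a_j<\infty$ imply $N^{-2}\sum_{j\le N}a_j^{-1}\to\infty$ then yields the $T^{-2}\int_0^T$ statement. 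For this, choose $t_n$ as minimizers of $\|\nabla f\|_{L^\infty}$ on intervals of length $\sim 1/A$. So your bound $\|\nabla f\|\ge c/\mathrm{dist}$ is the right local ingredient, but it must be fed by this summability, not by a decay rate.

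\paragraph{Problems upstream.}
\emph{Location of the saddle.} The ion spike makes $x=0$ a minimum of the electron potential energy, since $U''=\rho_\epsilon-n\approx 1/\epsilon>0$ there. So $(0,0)$ is a center, not a saddle. The saddle is at $(1/2,0)$, where $U''(1/2)=-n(1/2)$. What must be proved is $n(1/2)>0$, and this has nothing to do with smallness of $\epsilon$. The paper gets it by showing that $c_\epsilon^4-\frac{64}{5\sqrt2\pi}h_\epsilon=0$ would make the right side of $f_2=\frac12$ infinite.

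\emph{Choice of profile.} A profile $G$ supported only at energies above $\max\Phi$ is not decreasing. Then the energy-Casimir functional you invoke in Step 2 is not convex, and such two-stream-type states can be unstable. You need $G$ decreasing in the energy, as with $\varphi(e)=(c-e)_+^{5/2}$. Even then, energy-Casimir gives only $L^2$ control. This must be converted to $L^1$ control of the density by splitting into $|v|\le v_0$ and $|v|\ge v_0$ and using energy conservation (Proposition~\ref{L^1}).

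\emph{Placement of the perturbation.} Requiring $h$ to vanish near the saddle contradicts Step 3. The perturbation must sit in the box around $(1/2,0)$ precisely to create the nested level curves the argument needs.
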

	\subsection{Liturature}
	There are several results for gradient growth results for certain systems. For example, in the case of 2-dimensional incompressible Euler equation on the torus $\mathbb{T}^2$, Denisov \cite{growth_EE} proved the superlinear growth for $\Vert \nabla\omega(t)\Vert_{L^\infty}$ in time, where $\omega$ is the vorticity. Zlatos \cite{grotwh_EE_exp} obtained the exponential growth for such quantity for $C^{1, \alpha}$ some initial data with $C^{1, \alpha}$ vorticity.  Kiselev and Sverak \cite{grwoth_EE_bd} found the initial condition with the double exponential gradient growth in the case of the presence of the boundary.\\
	
	For the case of Vlasov-Poisson equation, the existence of solution with linear gradient growth can be obtained via Landau damping. See \cite{Gagnebin}, \cite{Ionescu}, and \cite{Mouhot} for detailed discussion about Landau damping. But as far as we know, we couldn't find any results beyond linear growth. \\
	
	We would also like to mention about the results for the steady state solutions in Vlasov-Poisson. Many result is based on constructing the conserved quantity which is continuous and positive definite so-called the Energy-Casimir method \cite{Casimir}. Let us mention about such examples. First, \cite{Yan&Gerhard_2}, Yan Guo and Gerhard Rein showed the existence of rotationally symmetric steady states stable under perturbations with the same symmetry. Gerhard Rein improved this result by removing the symmetric assumptions on perturbations in \cite{G.R}. Also, in \cite{G.W}, G.Wolansky constructed symmetric stable steady states for three-dimensional Vlasov-Poisson system in the stellar case. In \cite{Y.G}, Yan Guo constructed so-called ``Camm" type stable steady states in the same situation. More results concerning the stellar case can be found in \cite{Jack}, \cite{Yan&Gerhard}, and \cite{Yieh-Hei}. In the case of plasma, Peter Braasch, Gerhard Rein, and Jesenko Vukadinovich  constructed the stationary steady states in 3 dimensions  without the stationary ion approximations in \cite{Peter}. In \cite{Maria}, Maria J. Caceres, Jose A. Carrillo, and Jean  Dolbeault constructed $L^p$ stable states in the case of electrons moving in an external electric field. Finally, in \cite{Twisting}, Sangwook constructed the steady state for one dimensional periodic case, with background ion distributed according to the desingularized Dirac Delta distribution. This result is heavily used in our work.\\

	\subsection{Upper Bound for the Rate of Gradient Growth}
	Before we move on, we would like to remark some upper bound results for the gradient growth. The proof for the upper bound is standard, but to the best of our knowledge, there were no explicit comments of this fact. Thus, we introduce it here with proof.
	\begin{proposition}
		Consider the one-dimensional Vlasov-Poisson equation on the torus as given in (\ref{Vlasov}) and assume the consistency relation $\int_{\mathbb{T}\times\mathbb{R}} f(0, x, v)dvdx=1=\int_{\mathbb{T}}\rho(y)d$y. Assume that $\rho$ is bounded. Suppose that the $f(0, x, v)$ is smooth and compactly supported. Then, there exists a constant $C$ depending on the initial data $f(0, x, v)$ and the ion density $\rho$ such that the following inequality holds for any $t\geq0$:
		\begin{flalign*}
			\Vert \nabla_{x,v} f\Vert_{L^\infty}\leq C e^{Ct^2}.
		\end{flalign*}
	\end{proposition}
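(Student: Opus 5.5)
Since $f$ is transported along characteristics, I will bound the derivative of the flow map, and from that the gradient of $f$. The characteristics $(X(t),V(t))$ satisfy $\dot X=V$ and $\dot V=-E(X,t)$, and $f(t,X(t),V(t))=f_0(X(0),V(0))$. Hence
\[
\Vert\nabla_{x,v}f(t)\Vert_{L^\infty}\leq \Vert\nabla f_0\Vert_{L^\infty}\,\sup\Vert D\Phi_{t\to0}\Vert ,
\]
where $\Phi_{t\to 0}$ is the inverse flow map. The plan has three steps: bound $E$, bound $\partial_xE$, then run a Gronwall argument on the linearized system.

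\textbf{Bound on $E$.} Since $|K(x,y)|\leq 1$, we get $|E(x,t)|\leq \Vert\rho\Vert_{L^1}+\Vert f(t)\Vert_{L^1}=2$ for all $t$, because the $L^1$ norm of $f$ is conserved by transport. No smallness or decay is needed here.

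\textbf{Bound on $\partial_x E$.} Differentiating the kernel formula gives $\partial_xE(x,t)=\rho(x)-\int f(x,v,t)\,dv$, up to a sign convention. The term $\rho$ is bounded by assumption, so the real work is a bound on $\int f\,dv$. I will use the velocity support. Along characteristics $|\dot V|\leq 2$, so the support of $f(t)$ lies in $\{|v|\leq R_0+2t\}$, where $R_0$ bounds the initial velocity support. Since $\Vert f(t)\Vert_{L^\infty}=\Vert f_0\Vert_{L^\infty}$, this gives
\[
\Big|\int f\,dv\Big|\leq 2(R_0+2t)\Vert f_0\Vert_{L^\infty}.
\]
Therefore $\Vert\partial_xE(t)\Vert_{L^\infty}\leq C(1+t)$. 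This linear-in-time bound is exactly what produces the $t^2$ in the exponent.

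\textbf{Gronwall on the linearized flow.} Let $J$ be the Jacobian of the flow. It satisfies
\[
\frac{d}{dt}J=\begin{pmatrix}0&1\\ -\partial_xE&0\end{pmatrix}J,
\]
so
\[
\frac{d}{dt}|J|\leq \bigl(1+C(1+t)\bigr)|J|,
\]
which gives $|J(t)|\leq \exp\bigl(t+C(t+t^2/2)\bigr)\leq Ce^{Ct^2}$. The inverse map $\Phi_{t\to0}$ is the flow of the same time-reversed system over $[0,t]$, so its Jacobian obeys the same bound. The determinant equals $1$ by Liouville, so one could also use the inverse-matrix formula for a $2\times2$ matrix. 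Combining with the transport identity gives the claim. The constants $R_0$, $\Vert f_0\Vert_{L^\infty}$, $\Vert\nabla f_0\Vert_{L^\infty}$ and $\Vert\rho\Vert_{L^\infty}$ enter $C$.

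\textbf{Main obstacle: regularity of $E$.} Since $\rho$ is only assumed bounded, $E$ is Lipschitz in $x$ but not $C^1$. The Jacobian ODE then has to be interpreted carefully. I would handle this by working with difference quotients of the flow map, that is, estimating $|X_1-X_2|+|V_1-V_2|$ for two characteristics. This uses only the Lipschitz constant of $E(\cdot,t)$, which is $C(1+t)$, and Gronwall then gives the same Lipschitz bound on the flow map. The Lipschitz bound on the flow map yields the bound on $\Vert\nabla f\Vert_{L^\infty}$ almost everywhere, which equals the sup when $f$ is Lipschitz. I take existence of a global solution with these properties as standard for bounded $\rho$.
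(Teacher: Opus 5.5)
Your proof is correct and is essentially the paper's argument. Both use the same bound $|E|\le 2$, the same linear growth $Q+2t$ of the velocity support giving $\Vert \rho-\int f\,dv\Vert_{L^\infty}\le C(1+t)$, and the same Gronwall step producing $e^{Ct^2}$. The one difference is that you run Gronwall on the Jacobian of the characteristic flow, while the paper differentiates the Vlasov equation in $x$ and $v$ and estimates $\Vert\partial_x f\Vert_{L^\infty}$, $\Vert\partial_v f\Vert_{L^\infty}$ through transport equations with source terms. Your Lagrangian difference-quotient version stays rigorous when $\rho$ is only bounded (as for $\rho_\epsilon$, where $\partial_x E$ jumps), whereas the paper's differentiation of the equation in $x$ is formal there.
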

	\begin{proof}
		First, let us differentiate the equation (\ref{Vlasov}) with respect to $x$. Then, we have
		\begin{flalign*}
			\partial_{t}\partial_x f(x, v, t)+v\cdot\partial_{xx}f(x, v, t)-E(x,t)\cdot\partial_{v}\partial_xf(x, v, t)-\partial_xE(x,t)\cdot\partial_{v}f(x, v, t)=0.
		\end{flalign*}
		Using the Gauss's law, we have
		\begin{flalign*}
			\partial_{t}\partial_x f(x, v, t)+v\cdot\partial_{xx}f(x, v, t)-E(x,t)\cdot\partial_{v}\partial_xf(x, v, t)=\left(  \rho(x)-\int_{-\infty}^\infty f(t, x, u)du \right)\cdot\partial_{v}f(x, v, t).
		\end{flalign*}
		Since the left hand side is the transport operator of divergence-free vector field $(v, -E(x, t))$, we have the following estimate:
		\begin{flalign*}
			\frac{d}{dt}\Vert \partial_xf(t)\Vert_{L^\infty}\leq \left\Vert \left(  \rho(\cdot)-\int_{-\infty}^\infty f(t, \cdot, u)du \right)\cdot\partial_{v}f(t) \right\Vert_{L^\infty}\leq \left\Vert  \rho-\int_{-\infty}^\infty f(t, \cdot, u)du \right\Vert_{L^\infty}\Vert\partial_{v}f(t)\Vert_{L^\infty}.
		\end{flalign*}
		Now, we need to estimate $\left\Vert \int_{-\infty}^\infty f(t, \cdot, u)du \right\Vert_{L^\infty}$. Let us pick $Q>0$ such that $f_0(x, v)=0$ for $\vert v\vert \geq Q$.  Then, since $f$ is transported along the flow generated by $(v, -E(x, t))$,  we see that $f(t, x, v)=0$ for $\vert v\vert \geq Q+\int_{0}^t \Vert E(\tau)\Vert_{L^\infty}d\tau$. Since we have 
		\begin{flalign*}
			\vert E(x, t)\vert &\leq \int_0^1 \vert K(x,y)\vert \left\vert \rho(y)+\int_\mathbb{R} f(t, y, v)dv \right\vert dy \leq \int_0^1 \left(\rho(y)+\int_\mathbb{R} f(t, y, v)dv\right)dy=2.
		\end{flalign*}
		we see that  $f(t, x, v)=0$ for $\vert v\vert \geq Q+2t$. Therefore, we obtain the following estimate:
		\begin{flalign*}
			\frac{d}{dt}\Vert \partial_xf(t)\Vert_{L^\infty}&\leq \left\Vert  \rho-\int_{-\infty}^\infty f(t, \cdot, u)du \right\Vert_{L^\infty}\Vert\partial_{v}f(t)\Vert_{L^\infty}\leq \left( \Vert\rho\Vert_{L^\infty} + \int_{-Q-2t}^{Q+2t} \Vert f(t)\Vert_{L^\infty}du  \right)\Vert \partial_v f(t)\Vert_{L^\infty}\\&
			\leq \left( \Vert\rho\Vert_{L^\infty} + \int_{-Q-2t}^{Q+2t} \Vert f_0\Vert_{L^\infty}du  \right)\Vert \partial_v f(t)\Vert_{L^\infty}\leq C(1+t)\Vert \partial_vf(t)\Vert_{L^\infty}.
		\end{flalign*}
		Let us also  the equation (\ref{Vlasov}) with respect to $v$:
		\begin{flalign*}
			\partial_{t}\partial_v f(x, v, t)+v\cdot\partial_{x}\partial_vf(x, v, t)-E(x,t)\cdot\partial_{v}\partial_vf(x, v, t)+\partial_{x}f(x, v, t)=0,
		\end{flalign*}
		which gives
		\begin{flalign*}
			\frac{d}{dt}\Vert \partial_vf(t)\Vert_{L^\infty} \leq \Vert \partial_x f(t)\Vert_{L^\infty}.
		\end{flalign*}
		Adding these two inequality gives
		\begin{flalign*}
			\frac{d}{dt}\Vert \nabla_{x, v}f*(t)\Vert_{L^\infty}\leq C(1+t)\Vert \nabla_{x, v}f(t)\Vert_{L^\infty}.
		\end{flalign*}
		Using Gronwall inequality, we see that
		\begin{flalign*}
			\Vert \nabla_{x, v}f(t)\Vert_{L^\infty}\leq Ce^{Ct^2},
		\end{flalign*}
		and we are done.
	\end{proof}
	\subsection{Strategy for the Proof}
	The theorem is proven by first constructing the steady state solution which has a parabolic fixed point. This step is performed by converging the problem for the steady state solutions to a nonlinear algebraic equations and use the implicit function theorem. Second, we show the stability for the density in $L^1$, which implies the $L^\infty$ stability for the characteristic velocity field. Finally, we prove the theorem by perturbing the initial data and apply the argument done in \cite{growth_EE}. 
	\section{Proof for Superlinear Gradient Growth}
	\subsection{Preliminaries}
	\subsection{Characteristics and Stream Functions}
	\label{P}
	It is well known that the distribution function $f$ is conserved along the characteristic flow: i.e., the following holds.
	\begin{flalign}
		f(x, v, t)=f(\Phi_t^{-1}(x, v), 0)
	\end{flalign}
	where $\Phi$ is the flow corresponding to the following differential equations
	\begin{flalign}
		\frac{dx}{dt}=v\quad \text{and}\quad \frac{dv}{dt}=-E(x, t)
	\end{flalign}
	satisfying $\Phi_0=\operatorname{id}$.
	Here, $E(x, t)$ is the electric field. 
	This characteristic equation is a (non-autonomous) Hamiltonian system with Hamiltonian $$\psi(x, v, t)=\frac{1}{2}v^2+U(x, t)=\frac{1}{2}v^2+\int_{0}^{x}E(y, t)dy.$$
	As a consequence, this flow is (Lebesgue) measure-preserving.
	\subsection{Conservation of Energy}
	Another main ingredient in this paper is the conservation of energy. The energy of the state $f$ is defined as follows:
	$$H(f)=\int_{\mathbb{T}}\int_{\mathbb{R}} \frac{v^2}{2}f(x, v, t) dvdx+\int_{\mathbb{T}} \frac{1}{2} E(x, t)^2 dx.$$
	For sufficiently smooth solutions, it is known that this quantity is conserved in time:
	$$H(f(\cdot, \cdot, t))=H(f(\cdot, \cdot, 0)).$$
	\subsection{Construction of the reference steady state}
	We will use the steady state constructed in \cite{Twisting}. We briefly sketch how it is constructed. They first obtained solution for Dirac delta distribution and used implicit function theorem to get the solution in desingularized case. \\
	
	Let	
	\begin{flalign*}
		\rho_{\epsilon}(x)=
		\begin{cases}
			\frac{1}{\epsilon} & x\in[0, \frac{\epsilon}{2})\cup (1-\frac{\epsilon}{2}, 1]\\
			0 & \text{otherwise}.
		\end{cases}
	\end{flalign*}
	be the background charge distribution, which converges to the Dirac delta distribution centered at $x=0$ as $\epsilon\to0$.
	We choose the following ansatz as a steady state solution:
	$$f(x, v)=\varphi\left(\frac{v^2}{2}+U(x)\right)$$ 
	where $U(x)$ is the electric potential satisfying 
	$$E(x)=\frac{d}{dx}U(x),$$
	and $\varphi$ has the following form:
	\begin{flalign*}
		\varphi(E)=
		\begin{cases}
			(c-E)^{\frac{5}{2}} & E<c\\
			0 & \text{otherwise}
		\end{cases}
	\end{flalign*}
	where $c>0$ is to be determined.
	Then, by applying the Poisson equation for $U$, we see that $U$ must obey the following equation:
	\begin{flalign*}
		\frac{d^2}{dx^2}U(x)&=\rho_{\epsilon}(x)-\int_{-\infty}^{\infty}\varphi\left(\frac{v^2}{2}+U(x)\right)dv
	\end{flalign*}
	which, after calculation, can be rewritten in the following form:
	\begin{flalign*}
		\frac{d^2}{dx^2}U(x)&=\rho_{\epsilon}(x)-\int_{-\sqrt{2(c-U(x))}}^{\sqrt{2(c-U(x))}}\left(c-U(x)-\frac{v^2}{2}\right)^{\frac{5}{2}}dv=\rho_{\epsilon}(x)-\sqrt{2}\left( c-U(x) \right)^3\int_{-1}^{1} \left(1-u^2\right)^{\frac{5}{2}}du\\&=\rho_{\epsilon}(x)-\frac{5\sqrt{2}\pi}{16}(c-U(x))^3.
	\end{flalign*}
	We make the natural assumption $U(x)=U(1-x)$. As a consequence, we have $U'(0)=U'\left(\frac{1}{2}\right)=0$.
	Using this fact and integrating the equation several times, we obtain two equation that must be satisfied:
	\begin{flalign*}
		\frac{\epsilon}{2}=\int_{0}^{U(\frac{\epsilon}{2})}\frac{dy}{\sqrt{\frac{5\sqrt{2}\pi}{32}(-c^4+(c-y)^4)+\frac{2y}{\epsilon}}},
	\end{flalign*}
	and
	\begin{flalign*}
		\frac{1-\epsilon}{2}&=\int_{U(\frac{\epsilon}{2})}^{U(\frac{1}{2})}\frac{dy}{\sqrt{\frac{5\sqrt{2}\pi}{32}((c-y)^4-(c-U(\frac{1}{2}))^4)}}=\int_{U(\frac{\epsilon}{2})}^{c-\left( c^4-\frac{64}{5\sqrt{2}\pi \epsilon}U\left( \frac{\epsilon}{2} \right)  \right)^{\frac{1}{4}}}\frac{dy}{\sqrt{\frac{5\sqrt{2}\pi}{32}((c-y)^4-c^4)+\frac{2U(\frac{\epsilon}{2})}{\epsilon}}}.
	\end{flalign*}
	The detailed derivations are provided in \cite{Twisting}. By setting $h=\frac{U(\frac{\epsilon}{2})}{\epsilon}$, we can rewrite the two equations above in the following form:
	\begin{flalign*}
		\begin{cases}
			\frac{1}{2}=\int_{0}^{h} \frac{dz}{\sqrt{\frac{5\sqrt{2}\pi}{32}(-c^4+(c-\epsilon z)^4)+2z}}:=f_1(h, c, \epsilon)\\
			\frac{1}{2}=\frac{\epsilon}{2}+\int_{\epsilon h}^{c-\left(c^4-\frac{64h}{5\sqrt{2}\pi} \right)^{\frac{1}{4}}}\frac{dw}{\sqrt{\frac{5\sqrt{2}\pi}{32}((c-w)^4-c^4)+2h}}:=f_2(h, c, \epsilon).
		\end{cases}
	\end{flalign*}
	In \cite{Twisting}, it is shown that the solution for $(h, c)$ exists for every $\epsilon\in [0, \epsilon_0)$, where $\epsilon_0>0$ is sufficiently small. The strategy of the proof is to first obtain the solution for $\epsilon=0$ and use the implicit function theorem to extend to the case $\epsilon\in(0, \epsilon_0)$. This steady state will be used as a reference state. \\
	
	Before concluding this section, we show that this solution has a saddle point, which is an important ingredient for showing the gradient growth.
	\begin{proposition}
		\label{saddle}
		For $\epsilon\in(0, \epsilon_0)$, let $(h_\epsilon, c_\epsilon)$ denote the solution of the equation
		\begin{flalign*}
			\begin{cases}
				f_1(h, c, \epsilon)=\frac{1}{2}\\
				f_2(h, c, \epsilon)=\frac{1}{2}.
			\end{cases}
		\end{flalign*}
		and $U_\epsilon$ be the corresponding potential. Then, the stream function $\psi_\epsilon(x, v)=\frac{v^2}{2}+U_\epsilon(x)$ creates a saddle point at $(x, v)=(\frac{1}{2}, 0)$.
	\end{proposition}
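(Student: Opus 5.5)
We need to show that $(1/2,0)$ is a critical point of $\psi_\epsilon$ whose Hessian is indefinite. Since $\nabla\psi_\epsilon=(U_\epsilon'(x),v)$ and the Hessian is $\operatorname{diag}(U_\epsilon''(x),1)$, it suffices to prove two things: $U_\epsilon'(\tfrac12)=0$, and $U_\epsilon''(\tfrac12)<0$, i.e.\ that $U_\epsilon$ has a strict nondegenerate local maximum at $x=\tfrac12$.

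The first property is immediate from the symmetry $U_\epsilon(x)=U_\epsilon(1-x)$ built into the construction: differentiating gives $U_\epsilon'(\tfrac12)=-U_\epsilon'(\tfrac12)$. So $(\tfrac12,0)$ is a critical point of $\psi_\epsilon$.

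For the second-order condition, note that $\tfrac12\notin[0,\tfrac\epsilon2)\cup(1-\tfrac\epsilon2,1]$, so $\rho_\epsilon(\tfrac12)=0$. The profile equation then gives
\begin{flalign*}
U_\epsilon''\left(\tfrac12\right)=-\frac{5\sqrt2\pi}{16}\left(c_\epsilon-U_\epsilon\left(\tfrac12\right)\right)^3 .
\end{flalign*}
It therefore remains to show $c_\epsilon-U_\epsilon(\tfrac12)>0$.

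This strict inequality is the main obstacle. From the construction, $U_\epsilon(\tfrac12)=c_\epsilon-\left(c_\epsilon^4-\tfrac{64h_\epsilon}{5\sqrt2\pi}\right)^{1/4}$, so the quantity in question equals $\left(c_\epsilon^4-\tfrac{64h_\epsilon}{5\sqrt2\pi}\right)^{1/4}$. I would argue it is strictly positive in two ways.

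\begin{itemize}
\item \emph{By the structure of $f_2$.} If $c_\epsilon^4-\tfrac{64h_\epsilon}{5\sqrt2\pi}=0$, the integrand in $f_2$ would behave like $(c-w)^{-2}$ near the upper endpoint $w=c$. The integral would then diverge, contradicting $f_2=\tfrac12$. So finiteness of $f_2$ forces the inequality to be strict.
\item \emph{By continuity in $\epsilon$.} At $\epsilon=0$ the solution $(h_0,c_0)$ from \cite{Twisting} satisfies $c_0^4>\tfrac{64h_0}{5\sqrt2\pi}$. The implicit function theorem gives continuity of $(h_\epsilon,c_\epsilon)$ in $\epsilon$, so the strict inequality persists for $\epsilon<\epsilon_0$, shrinking $\epsilon_0$ if necessary.
\end{itemize}

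An alternative route avoids the explicit formula. The density $\int\varphi\,dv$ is positive wherever $U<c$. If $c_\epsilon-U_\epsilon(\tfrac12)=0$, then $U_\epsilon\equiv c_\epsilon$ would solve the ODE on $(\tfrac\epsilon2,1-\tfrac\epsilon2)$ with the same data at $\tfrac12$. By uniqueness this would force $U_\epsilon$ to be constant on that interval, which contradicts the matching conditions at $x=\tfrac\epsilon2$.

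Combining the two steps, the Hessian of $\psi_\epsilon$ at $(\tfrac12,0)$ is $\operatorname{diag}(U_\epsilon''(\tfrac12),1)$ with $U_\epsilon''(\tfrac12)<0$. It is indefinite, so $(\tfrac12,0)$ is a nondegenerate saddle point, with local hyperbolic flow by the Morse lemma.
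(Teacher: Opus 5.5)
Your argument is correct and is essentially the paper's proof. Both reduce to $U_\epsilon''(\tfrac12)=-\tfrac{5\sqrt2\pi}{16}\bigl(c_\epsilon^4-\tfrac{64h_\epsilon}{5\sqrt2\pi}\bigr)^{3/4}$ via the Poisson equation and the relation for $U_\epsilon(\tfrac12)$ from the construction, and both exclude the degenerate case because the $f_2$ integrand would become a constant multiple of $(c_\epsilon-w)^{-2}$, which is not integrable at $w=c_\epsilon$. The paper phrases the conclusion through the linearization $\begin{pmatrix}0&1\\-U_\epsilon''&0\end{pmatrix}$ of the characteristic field rather than the Hessian of $\psi_\epsilon$, which is equivalent.

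Your continuity and ODE-uniqueness alternatives are not needed. The continuity one rests on an unchecked claim about $\epsilon=0$ and would shrink $\epsilon_0$. In the uniqueness one, the contradiction actually comes from integrating back from $x=\tfrac\epsilon2$ to $x=0$: one finds $U_\epsilon>c_\epsilon$ on $[0,\tfrac\epsilon2)$, so $U_\epsilon''=\tfrac1\epsilon$ there and $U_\epsilon'(0)=-\tfrac12\neq0$.
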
 
	\begin{proof}
		The gradient of the characteristic field is $$\nabla\nabla^\perp\psi(\epsilon)=\begin{pmatrix}
			0 & 1\\
			-\frac{d^2}{dx^2}U_\epsilon(x) & 0
		\end{pmatrix}$$
		
		and therefore, we need to show that $\frac{d^2}{dx^2}U_\epsilon(\frac{1}{2})<0$. But we have by the Poisson equation that 
		$$\frac{d^2}{dx^2}U_\epsilon\left(\frac{1}{2}\right)=-\frac{5\sqrt{2}\pi}{16}\left(c-U\left(\frac{1}{2}\right)\right)^3.$$
		We also have the relation 
		\begin{flalign*}
			U\left(\frac{1}{2}\right)=c-\left( c^4-\frac{64}{5\sqrt{2}\pi \epsilon}U\left( \frac{\epsilon}{2} \right)  \right)^{\frac{1}{4}}.
		\end{flalign*}
		which is also shown in \cite{Twisting} as a byproduct of integrating the Poisson equation. Then, we have
		\begin{flalign*}
			\frac{d^2}{dx^2}U_\epsilon\left(\frac{1}{2}\right)=-\frac{5\sqrt{2}\pi}{16}\left( c_\epsilon^4-\frac{64}{5\sqrt{2}\pi\epsilon}U_\epsilon\left(\frac{\epsilon}{2}\right) \right)^\frac{3}{4}=-\frac{5\sqrt{2}\pi}{16}\left( c_\epsilon^4-\frac{64}{5\sqrt{2}\pi}h_\epsilon \right)^\frac{3}{4}
		\end{flalign*}
		The right hand side is clearly nonpositive. Moreover, it is strictly negative, since otherwise, we have
		$c_\epsilon^4-\frac{64}{5\sqrt{2}\pi}h_\epsilon=0$ and this contradicts the second equation, as follows:
		\begin{flalign*}
			\frac{1}{2}&=\frac{\epsilon}{2}+\int_{\epsilon h_\epsilon}^{c_\epsilon-\left(c_\epsilon^4-\frac{64h_\epsilon}{5\sqrt{2}\pi} \right)^{\frac{1}{4}}}\frac{dw}{\sqrt{\frac{5\sqrt{2}\pi}{32}((c_\epsilon-w)^4-c_\epsilon^4)+2h_\epsilon}}\\
			&=\frac{\epsilon}{2}+\int_{\epsilon h_\epsilon}^{c_\epsilon} \left(\frac{32}{5\sqrt{2}\pi}\right)^\frac{1}{2}\frac{1}{(c_\epsilon-w)^2} dw =\infty
		\end{flalign*}
	\end{proof}
	
	\subsection{Stability of the density in $L^1$}
	To show the gradient growth, we need the stability of the density, which is a consequence of the result below:
	\begin{proposition}
		\label{L^1}
		Let $f_*$ be the steady state constructed above for some $\epsilon\in(0, \epsilon_0)$ and let $f$ be another solution for the Vlasov-Poisson equation with background charge density $\rho_\epsilon$. Then, the following estimate is true:
		\begin{flalign*}
			\left\Vert \int f(t, \cdot, v)dv-\int f_*(\cdot, v)dv \right\Vert_{L^1(\mathbb{T})} &\leq C\sqrt{v_0} \left( \int_{\mathbb{T}\times\mathbb{R}} (1+v^2)\vert f(0, x, v)-f_*(x,v) \vert dvdx \right)^\frac{1}{2} \\& \quad+ \frac{4}{v_0^2}H(f_*) +\frac{4}{v_0^2} \int_{\mathbb{T}\times\mathbb{R}} (1+v^2)\vert f(0, x, v)-f_*(x,v) \vert dvdx
		\end{flalign*}
		where $C$ is a constant depending only on $\Vert f(t)\Vert_{L^\infty_{x, v}}=\Vert f(0)\Vert_{L^\infty_{x, v}}$ and $v_0>0$ is arbitrary.
	\end{proposition}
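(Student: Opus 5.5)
Our plan is to use the Energy--Casimir structure of $f_*=\varphi(\psi_*)$, with $\psi_*=\frac{v^2}{2}+U_\epsilon(x)$. The map $\varphi$ is nonincreasing in the energy variable, so $f_*$ is the decreasing rearrangement of itself with respect to $\psi_*$. This is what gives a coercive relative-energy functional.

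\textbf{Step 1 (relative energy).} Define
\begin{flalign*}
\mathcal{D}(f)=\int_{\mathbb{T}\times\mathbb{R}}\Big(\psi_*(f-f_*)+Q(f)-Q(f_*)\Big)dvdx+\frac12\int_{\mathbb{T}}(E-E_*)^2dx ,
\end{flalign*}
where $Q$ is the Casimir with $Q'(s)=-\varphi^{-1}(s)+c$ on the range of $f_*$, so that $Q'(f_*)=-\psi_*$ on the support. Expanding $\frac12\int(E-E_*)^2$ via Gauss's law and $E_*=U_\epsilon'$ gives
\begin{flalign*}
\mathcal{D}(f)=H(f)-H(f_*)+\mathcal{C}(f)-\mathcal{C}(f_*),\qquad \mathcal{C}(f)=\int Q(f)\,dvdx .
\end{flalign*}
The mass constraint $\int f=\int f_*=1$ kills the linear term. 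Since $H$ and $\mathcal{C}$ are conserved, $\mathcal{D}(f(t))=\mathcal{D}(f(0))$ for all $t$.

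\textbf{Step 2 (bounding $\mathcal{D}(f(0))$).} For $f$ bounded by $\Vert f(0)\Vert_{L^\infty}$ we have $|Q(f)-Q(f_*)|\le C|f-f_*|$. The term $\psi_*(f-f_*)$ is bounded by $C(1+v^2)|f-f_*|$. For the field term, $|E-E_*|\le \int|\int (f-f_*)dv|dy\le \Vert f-f_*\Vert_{L^1}$. Together these give $\mathcal{D}(f(0))\le C\int(1+v^2)|f(0)-f_*|\,dvdx$.

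\textbf{Step 3 (pointwise coercivity).} The integrand $\psi_*(f-f_*)+Q(f)-Q(f_*)\ge 0$ pointwise, by convexity of $Q$ and the supporting-line property $Q'(f_*)=-\psi_*$. This is the standard Guo--Rein argument, using that $\varphi$ is decreasing. The main obstacle is a quantitative lower bound. On the region $|v|\le v_0$ where $f,f_*\le\Vert f\Vert_\infty$, we need the integrand to be at least $c\,|f-f_*|^2$ (using $Q''\ge c>0$ there). Points outside the support of $f_*$ are handled by the linear term $(\psi_*-c)f\ge 0$. Here we must take care that $\varphi^{-1}$ degenerates like $s^{2/5}$ near $0$, so $Q''=-(\varphi^{-1})'\sim s^{-3/5}$ is large, which helps rather than hurts. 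We will check this, but we plan to use $Q''\ge c(\Vert f\Vert_\infty)$.

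\textbf{Step 4 (splitting the density).} For each $x$, split the $v$-integral at $|v|=v_0$:
\begin{flalign*}
\int|\rho_f-\rho_{f_*}|dx\le \int_{|v|\le v_0}|f-f_*|+\int_{|v|>v_0}(f+f_*).
\end{flalign*}
For the first term, Cauchy--Schwarz over a set of measure $2v_0$ together with Steps 1--3 gives $C\sqrt{v_0}\,\mathcal{D}^{1/2}$. For the tail we use Chebyshev, $\int_{|v|>v_0}f\le v_0^{-2}\int v^2 f\le 2v_0^{-2}H(f(t))$. Energy conservation gives $H(f(t))=H(f(0))\le H(f_*)+\int(1+v^2)|f(0)-f_*|$, the field part being controlled as in Step 2. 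Adding the analogous bound for $f_*$ yields the constants $4/v_0^2$, with the field differences absorbed in the weighted $L^1$ term.
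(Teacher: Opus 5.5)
Your argument is correct, and its skeleton is the paper's. You split at $|v|=v_0$, apply Cauchy--Schwarz on $\{|v|\le v_0\}$, use Chebyshev plus conservation of $H$ on the tail, and bound $H(f(t))=H(f(0))$ by $H(f_*)$ plus a weighted $L^1$ norm of $f(0)-f_*$. (The crude bounds, e.g.\ via $|E|,|E_*|\le 2$ as in the paper, give $H(f(0))\le H(f_*)+2\int(1+v^2)\vert f(0)-f_*\vert\,dvdx$ rather than constant $1$. That factor $2$ is exactly what produces the stated $\frac{4}{v_0^2}$.)

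The genuine difference concerns the estimate $\Vert f(t)-f_*\Vert_{L^2}^2\le C\int(1+v^2)\vert f(0)-f_*\vert\,dvdx$. The paper takes it as a black box (Proposition~\ref{L^2}, proved in the earlier steady-state paper), whereas your Steps 1--3 derive it from the relative Energy--Casimir functional. This makes the argument self-contained. It also shows how $C$ depends on $\Vert f(0)\Vert_{L^\infty}$, namely through the convexity and Lipschitz constants of $Q$ on $[0,M]$, where $M=\max(\Vert f(0)\Vert_{L^\infty},\Vert f_*\Vert_{L^\infty})$.

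One bookkeeping correction is needed there. With $Q'(s)=-\varphi^{-1}(s)+c=s^{2/5}$ you get $Q'(f_*)=c-\psi_*$ on the support, not $-\psi_*$. So the integrand $\psi_*(f-f_*)+Q(f)-Q(f_*)$ is not pointwise nonnegative as you claim. The integrand that is nonnegative is $(\psi_*-c)(f-f_*)+Q(f)-Q(f_*)$, and it has the same integral because $\int f=\int f_*=1$.

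With $Q(s)=\frac{5}{7}s^{7/5}$ one has $Q''(s)=\frac{2}{5}s^{-3/5}\ge\frac{2}{5}M^{-3/5}$ on $(0,M]$. This gives the lower bound $\frac{1}{5}M^{-3/5}\vert f-f_*\vert^2$ on the support of $f_*$. Off the support ($\psi_*\ge c$), the integrand is at least $\frac{5}{7}f^{7/5}\ge\frac{5}{7}M^{-3/5}f^2$. So coercivity holds on all of $\mathbb{T}\times\mathbb{R}$, and your restriction to $|v|\le v_0$ in Step 3 is not needed.
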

	Before proving the proposition above, we need some result for the $L^2$ stability of $f_*$, which is also proven in \cite{Twisting}.
	\begin{proposition}
		\label{L^2}
		Let $f_*$ and $f$ be as in Proposition \ref{L^1}. Then, there exists $C>0$ depending only on $f_*$ and $\Vert f(0)\Vert_{L^\infty}$ such that the following holds:
		\begin{flalign*}
			\Vert f(t)-f_*\Vert^2_{L^2(\mathbb{T}\times\mathbb{R})} \leq C\int_{\mathbb{T}\times\mathbb{R}}(1+v^2)\vert f(0)-f_*\vert dvdx
		\end{flalign*}
	\end{proposition}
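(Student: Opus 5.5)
This is an Energy--Casimir argument. The reference state $f_*=\varphi(\psi_*)$, with $\psi_*=\frac{v^2}{2}+U_*(x)$, is a critical point of a suitable Casimir-augmented energy. We will show that the relative energy is conserved along solutions and controls $\Vert f-f_*\Vert_{L^2}^2$ from below. Concretely, let $\Phi$ be a strictly convex function on $[0,\infty)$ with $\Phi'(\varphi(E))=c-E$ on the support of $\varphi$. Since $\varphi(E)=(c-E)^{5/2}$, this means $\Phi'(s)=s^{2/5}$, hence $\Phi(s)=\frac57 s^{7/5}$, up to the usual correction so that $\Phi'(0)=0$ is compatible with $E\ge c$ off the support. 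Define
\begin{flalign*}
\mathcal{H}_C(f)=H(f)+\int_{\mathbb{T}\times\mathbb{R}}\Phi(f)\,dvdx - c\int_{\mathbb{T}\times\mathbb{R}} f\,dvdx .
\end{flalign*}
Each term is conserved: $H$ by conservation of energy, and the Casimir and mass terms because $f$ is transported by the measure-preserving flow of Section \ref{P}.

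The key step is the expansion of the relative energy. Write $g=f-f_*$ and $E-E_*=\tilde E$, where $\tilde E$ is generated by the density difference $-\int g\,dv$ through the kernel $K$. Expanding $\frac12\int E^2$ gives $\frac12\int E_*^2+\int E_*\tilde E+\frac12\int\tilde E^2$. Integrating by parts using Gauss's law turns $\int E_*\tilde E$ into $\int U_*\int g\,dv\,dx$; this is the step that uses that $g$ has zero total mass, so boundary and constant terms vanish. Hence
\begin{flalign*}
\mathcal{H}_C(f)-\mathcal{H}_C(f_*)=\int\big[\Phi(f)-\Phi(f_*)-(c-\psi_*)g\big]dvdx+\frac12\int_{\mathbb{T}}\tilde E^2dx .
\end{flalign*}
The integrand in the first term equals $\Phi(f)-\Phi(f_*)-\Phi'(f_*)(f-f_*)$ where $f_*>0$. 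Where $f_*=0$, we have $c-\psi_*\le0$, so the integrand is at least $\Phi(f)\ge0$. Both terms are therefore nonnegative.

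For the lower bound, we use that $\Phi''(s)=\frac25 s^{-3/5}\ge\frac25 M^{-3/5}$ for $0\le s\le M$, with $M=\max(\Vert f(0)\Vert_{L^\infty},\Vert f_*\Vert_{L^\infty})$; note that $\Vert f(t)\Vert_{L^\infty}$ is conserved. Taylor's theorem then gives pointwise
\begin{flalign*}
\Phi(f)-\Phi(f_*)-\Phi'(f_*)g\ge \tfrac15 M^{-3/5}g^2 .
\end{flalign*}
Where $f_*=0$, we instead use $\Phi(f)\ge \frac57 M^{-3/5}f^2$. Together these give $\Vert f(t)-f_*\Vert_{L^2}^2\le C\,(\mathcal{H}_C(f(t))-\mathcal{H}_C(f_*))$.

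For the upper bound at $t=0$, conservation lets us evaluate everything at time zero, and we bound each piece by $\int(1+v^2)|g_0|$:
\begin{itemize}
\item The Casimir piece is bounded by $C|g_0|$ pointwise, since $\Phi'$ is bounded on $[0,M]$ and $|c-\psi_*|\le C(1+v^2)$.
\item The field piece satisfies $\frac12\int\tilde E_0^2\le\frac12\Vert\tilde E_0\Vert_{L^\infty}\Vert\tilde E_0\Vert_{L^1}\le C\Vert\int g_0dv\Vert_{L^1}$, since $|K|\le1$ and $\tilde E_0$ is bounded by the total variation, which is at most $2$.
\end{itemize}

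The main obstacle is the mass constraint. The proposition as stated does not assume $\int f(0)=\int f_*$, yet the integration-by-parts step needs zero total charge. The paper's standing assumption that all states have unit mass resolves this. Otherwise one would keep the multiplier term $c\int g$, which is conserved and bounded by $\int|g_0|$, and absorb it into the right-hand side. I would also take care that $\Phi'$ is only Hölder at $0$ and that $f\ge0$ is preserved, so the Taylor bound is valid.
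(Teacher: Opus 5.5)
The paper does not prove this proposition itself but defers to \cite{Twisting}, and your Energy--Casimir argument is a correct, self-contained proof of the standard kind that yields exactly this bound: the weight $1+v^2$ enters through $|c-\psi_*|$, and the dependence on $\Vert f(0)\Vert_{L^\infty}$ enters through $\Phi''(s)=\frac{2}{5}s^{-3/5}\ge\frac{2}{5}M^{-3/5}$ on $[0,M]$. As in the paper, you rely on conservation of $H$, unit total mass, periodicity of $U_*$ (from $U_*(x)=U_*(1-x)$) and Gauss's law $E'=\rho_\epsilon-\int f\,dv$; the kernel $K$ as printed differentiates to the opposite sign, so basing the integration by parts on Gauss's law is the right choice, and the ``correction'' to $\Phi(s)=\frac{5}{7}s^{7/5}$ you mention is unnecessary since $\Phi'(0)=0$ already.
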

	\begin{proof}[Proof of Proposition \ref{L^1}]
		We proceed as follows:
		\begin{flalign*}
			\left\Vert \int_\mathbb{R} f(t, \cdot, v)dv-\int_\mathbb{R} f_*(\cdot, v)dv \right\Vert_{L^1(\mathbb{T})}&\leq \int_{\mathbb{T}\times\mathbb{R}} \vert f(t, x, v) -f_*(x, v)\vert dxdv
			\\&\leq \int_{\vert v\vert\leq v_0} \int_{\mathbb{T}} \vert f(t, x, v)-f_*(x, v)\vert dxdv + \int_{\vert v\vert\geq v_0} \int_{\mathbb{T}} \vert f(t, x, v)-f_*(x, v)\vert dxdv \\
			&\leq \sqrt{2v_0} \Vert f(t)-f_*\Vert_{L^2(\mathbb{T}\times\mathbb{R})} + \frac{2}{v_0^2} \int_{\vert v\vert\geq v_0}\int_{\mathbb{T}}\frac{v^2}{2}(f(t, x, v)+f_*(x, v))dxdv \\&
			\leq  \sqrt{2v_0} \Vert f(t)-f_*\Vert_{L^2(\mathbb{T}\times\mathbb{R})} + \frac{2}{v_0^2} (H(f(t))+H(f_*)) \\
			&= \sqrt{2v_0} \Vert f(t)-f_*\Vert_{L^2(\mathbb{T}\times\mathbb{R})} + \frac{2}{v_0^2} (H(f(0))+H(f_*))
		\end{flalign*}
		where we have used Cauchy-Schwarz inequality in the third line and the fact that the total energy is conserved in $t$ in the last line.  Using Proposition \ref{L^2}, we have
		\begin{flalign*}
			&\leq C\sqrt{v_0} \left( \int_{\mathbb{T}\times\mathbb{R}}(1+v^2)\vert f(0)-f_*\vert dvdx \right)^\frac{1}{2} +\frac{2}{v_0^2}(H(f(0))+H(f_*))\\
			&\leq C\sqrt{v_0} \left( \int_{\mathbb{T}\times\mathbb{R}}(1+v^2)\vert f(0)-f_*\vert dvdx \right)^\frac{1}{2} +\frac{2}{v_0^2}(2H(f_*)+\vert H(f(0))-H(f_*)\vert)\\
			&\leq C\sqrt{v_0} \left( \int_{\mathbb{T}\times\mathbb{R}}(1+v^2)\vert f(0)-f_*\vert dvdx \right)^\frac{1}{2} \\&\quad+\frac{2}{v_0^2}\left(2H(f_*)+\left\vert \int_{\mathbb{T}\times\mathbb{R}} \frac{v^2}{2}(f(0, x, v)-f_*(x, v))dvdx +\frac{1}{2}\int_{\mathbb{T}} (E(0, x)^2-E_*(x)^2)dx  \right\vert\right)\\
			&\leq C\sqrt{v_0} \left( \int_{\mathbb{T}\times\mathbb{R}}(1+v^2)\vert f(0)-f_*\vert dvdx \right)^\frac{1}{2} \\&\quad+\frac{2}{v_0^2}\left(2H(f_*)+\int_{\mathbb{T}\times\mathbb{R}} \frac{v^2}{2}\vert f(0, x, v)-f_*(x, v)\vert dvdx +\frac{1}{2}\int_{\mathbb{T}} \vert E(0, x)+E_*(x)\vert \vert E(x, 0)-E_*(x) \vert dx  \right)
		\end{flalign*}
		Using the formula 
		\begin{flalign*}
			E(x)=\int_{0}^{1}K(x, y)\left[\rho_\epsilon(y)-\int_{-\infty}^{\infty}f(y, v)dv\right]dy,
		\end{flalign*}
		we obtain the following bound for $E$:
		\begin{flalign*}
			\vert E(x)\vert &\leq \int_0^1 \vert K(x,y)\vert \left\vert \rho_\epsilon(y)+\int_\mathbb{R} f(y, v)dv \right\vert dy \leq \int_0^1 \left(\rho_\epsilon(y)+\int_\mathbb{R} f(y, v)dv\right)dy=2.
		\end{flalign*}
		Moreover, we have 
		\begin{flalign*}
			\vert E(0, x)-E_*(x)\vert \leq \int_0^1 \vert K(x, y)\vert \int_\mathbb{R} \vert f(0, y, v)-f_*(y, v)\vert dvdy \leq \int_{\mathbb{T}\times\mathbb{R}} \vert f(0, y, v)-f_*(y, v)\vert dvdy
		\end{flalign*}
		Therefore, we have 
		\begin{flalign*}
			&C\sqrt{v_0} \left( \int_{\mathbb{T}\times\mathbb{R}}(1+v^2)\vert f(0, x, v)-f_*(x, v)\vert dvdx \right)^\frac{1}{2} \\&\quad+\frac{2}{v_0^2}\left(2H(f_*)+\int_{\mathbb{T}\times\mathbb{R}} \frac{v^2}{2}\vert f(0, x, v)-f_*(x, v)\vert dvdx +\frac{1}{2}\int_{\mathbb{T}} \vert E(0, x)+E_*(x)\vert\vert E(x, 0)-E_*(x) \vert dx  \right)\\
			&\leq C\sqrt{v_0} \left( \int_{\mathbb{T}\times\mathbb{R}}(1+v^2)\vert f(0, x, v)-f_*(x, v)\vert dvdx \right)^\frac{1}{2} \\&\quad+\frac{2}{v_0^2}\left(2H(f_*)+\int_{\mathbb{T}\times\mathbb{R}} \frac{v^2}{2}\vert f(0, x, v)-f_*(x, v)\vert dvdx +2\int_{\mathbb{T}} \vert E(x, 0)-E_*(x) \vert dx  \right)\\
			&\leq C\sqrt{v_0} \left( \int_{\mathbb{T}\times\mathbb{R}}(1+v^2)\vert f(0, x, v)-f_*(x, v)\vert dvdx \right)^\frac{1}{2} \\&\quad+\frac{4}{v_0^2} H(f_*) +\frac{2}{v_0^2} \left(\int_{\mathbb{T}\times\mathbb{R}} \frac{v^2}{2}\vert f(0, x, v)-f_*(x, v)\vert dvdx +2\int_{\mathbb{T}\times\mathbb{R}} \vert f(0, x, v)-f_*(x, v) \vert dvdx  \right)\\
			&\leq C\sqrt{v_0} \left( \int_{\mathbb{T}\times\mathbb{R}} (1+v^2)\vert f(0, x, v)-f_*(x,v) \vert dvdx \right)^\frac{1}{2} \\& \quad+ \frac{4}{v_0^2}H(f_*) +\frac{4}{v_0^2} \int_{\mathbb{T}\times\mathbb{R}} (1+v^2)\vert f(0, x, v)-f_*(x,v) \vert dvdx
		\end{flalign*}
		and the proof is complete.
	\end{proof}
	Proposition \ref{L^1} implies the stability result as follows. For any $\varepsilon>0$ and $M>0$, there exists $\delta>0$ such that for all solutions $f(t, x, v)$ satisfying $$\int_{\mathbb{T}\times\mathbb{R}} (1+v^2)\vert f(0, x, v)-f_*(x,v) \vert dvdx<\delta\qquad \text{and} \qquad \Vert f(0)\Vert_{L^\infty}\leq M, $$
	we have that $\left\Vert \int f(t, \cdot, v)dv-\int f_*(\cdot, v)dv \right\Vert_{L^1(\mathbb{T})}<\varepsilon$. Indeed, for such $\varepsilon$, we can pick $v_0$ large so that 
	\begin{flalign*}
		\frac{4}{v_0^2}H(f_*)\leq \frac{\varepsilon}{2}
	\end{flalign*}
	holds, and pick $\delta>0$ small enough so that
	\begin{flalign*}
		\leq C\sqrt{v_0\delta} +\frac{4}{v_0^2} \delta\leq\frac{\varepsilon}{2}.
	\end{flalign*}
	holds, and we see that such $\delta$ works.
	\\
	Since we have $\frac{d}{dx}E(x)=\rho_\epsilon(x)-\int_{\mathbb{R}}f(t, x, v)dv$ we see that 
	\begin{flalign*}
		\vert E(x, t)-E_*(x) \vert \leq \varepsilon.
	\end{flalign*}
	Therefore, we obtain the $L^\infty$-stability of the characteristic velocity, which plays a crucial role in showing the superlinear growth for the gradient.
	\subsection{Proof for the superlinear growth}
	For the steady state $f_*$ constructed before, we see that the characteristic ODE can be expanded around $(x, v)=(1/2, 0)$ as follows:
	\begin{flalign*}
		\frac{dx}{dt}=v,\qquad \frac{dv}{dt}=A^2\left(x-\frac{1}{2}\right)+g\left(x-\frac{1}{2}\right)
	\end{flalign*}
	where $A>0$ and $g(x)=o(x)$. It follows that there exists $\gamma>0$ such that we have $\vert g(x)\vert \leq 0.01A\vert x\vert $ for all $\vert x\vert\leq 4\gamma$. Also, by the $L^\infty$ stability of $E$ obtained in the previous section, we see that if $$\int_{\mathbb{T}\times\mathbb{R}} (1+v^2)\vert f(0, x, v)-f_*(x,v) \vert dvdx<\delta \qquad \text{and} \qquad \Vert f(0)\Vert_{L^\infty}\leq 2\Vert f_*\Vert_{L^\infty},$$ then $\Vert E(x, t)-E_*(x) \Vert\leq 0.0001\gamma A$ for all $t\geq 0$. let us assume that this is the case. Note that the characteristic ODE for $f$ is 
	\begin{flalign*}
		\frac{dx}{dt}=v,\qquad \frac{dv}{dt}=A^2\left(x-\frac{1}{2}\right)+g\left(x-\frac{1}{2}\right)+(E(x, t)-E_*(x))
	\end{flalign*}
	Now, if we make change of coordinates
	\begin{flalign}
		\label{coordinate}
		\begin{cases}
			\xi= \frac{A(x-\frac{1}{2})+v}{\sqrt{1+A^2}} \\
			\eta= \frac{A(x-\frac{1}{2})-v}{\sqrt{1+A^2}}.
		\end{cases}
	\end{flalign}
	Then, we see that the characteristic ODE can be rewritten as
	\begin{flalign}
		\label{char}
		\frac{d\xi}{dt} = A\xi+g_1(\xi, \eta)+h_1(\xi, \eta, t), \qquad \text{and}\qquad \frac{d\eta}{dt} = -A\eta+g_2(\xi, \eta)+h_2(\xi, \eta, t)
	\end{flalign}
	where $\vert g_{(1, 2)}(\xi, \eta)\vert \leq 0.02 A\sqrt{\xi^2+\eta^2}$ and $\vert h_{(1, 2)}(\xi, \eta, t)\vert \leq 0.0002\gamma A$.
	We need a lemma that gives some information about such flow.
	\begin{lemma}
		\label{flow}
	Consider the system (\ref{char}). Then, for $\tau\leq \frac{1}{10A}$, the followings are true:
	\begin{enumerate}
		\item If  $\vert \xi(0)\vert \leq \gamma$ and $\vert \eta(0)\vert<0.1\gamma$, then $\vert \eta(\tau)\vert <0.1\gamma$. 
		\item If  $(1-\frac{A\tau}{2})\gamma < \vert \xi(0)\vert < \gamma$ and $\vert \eta(0)\vert<0.1\gamma$, then $\vert \xi(\tau)\vert >\gamma$. 
	\end{enumerate}
	\end{lemma}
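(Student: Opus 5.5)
The plan is a continuity/barrier argument on the short interval $[0,\tau]$ with $\tau\le \frac{1}{10A}$. The first step is an a priori bound on $\xi$. From (\ref{char}) and the bounds on $g_{1}$ and $h_{1}$, as long as the trajectory stays in the region where the expansion is valid,
\[
\frac{d}{dt}|\xi|\le A|\xi|+0.02A\sqrt{\xi^2+\eta^2}+0.0002\gamma A .
\]
Suppose that on some interval $[0,t_1]$ we have $|\eta|\le 0.1\gamma$ and $|\xi|\le 2\gamma$. Then the right-hand side is at most $C A\gamma$ with a small explicit constant. Gronwall over a time $\le \frac{1}{10A}$ then gives $|\xi(t)|\le e^{0.1}(\gamma+0.05\gamma)<1.2\gamma$ for $|\xi(0)|\le\gamma$. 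A standard continuity (bootstrap) argument upgrades this to the whole interval $[0,\tau]$. One also checks that $|\xi|\le 1.2\gamma$ and $|\eta|\le 0.1\gamma$ keep $|x-\frac12|=\frac{\sqrt{1+A^2}}{2A}|\xi+\eta|$ inside the region $|x-\frac12|\le 4\gamma$ where $|g|\le 0.01A|x-\frac12|$ holds. If necessary, this is arranged by shrinking $\gamma$ relative to $A$, since $\gamma$ was only required to be small. In that region $\sqrt{\xi^2+\eta^2}\le 1.21\gamma$, so $|g_{1,2}|\le 0.0242A\gamma$.

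For part 1, I use a barrier at $|\eta|=0.1\gamma$. Let $t^*$ be the first time $|\eta(t^*)|=0.1\gamma$, if such a time exists in $[0,\tau]$. At $t^*$ the bootstrap bounds hold, so
\[
\frac{d}{dt}|\eta|\le -A(0.1\gamma)+0.0242A\gamma+0.0002A\gamma<0 .
\]
This contradicts $|\eta|$ reaching $0.1\gamma$ from below. Hence $|\eta(\tau)|<0.1\gamma$. This also closes the bootstrap assumption on $\eta$ used in the first step.

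For part 2, by the symmetry $(\xi,\eta)\mapsto(-\xi,-\eta)$ of the bounds, assume $\xi(0)>(1-\frac{A\tau}{2})\gamma\ge 0.95\gamma$. By part 1, $|\eta|<0.1\gamma$ throughout. While $0.95\gamma\le\xi\le 1.2\gamma$,
\[
\frac{d\xi}{dt}\ge A\xi-0.0242A\gamma-0.0002A\gamma\ge 0.92A\gamma>0 .
\]
So $\xi$ is increasing and never drops below $\xi(0)$. Suppose $\xi$ stays $\le\gamma$ on $[0,\tau]$. Integrating gives
\[
\xi(\tau)\ge \xi(0)+0.92A\gamma\tau>\gamma-\tfrac{A\tau\gamma}{2}+0.92A\gamma\tau>\gamma ,
\]
which is a contradiction. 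Therefore $\xi$ exceeds $\gamma$ at some time, and the same positivity of $\dot\xi$ on $[\gamma,1.2\gamma]$ prevents it from returning below $\gamma$ before time $\tau$. Hence $\xi(\tau)>\gamma$.

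The main obstacle is the bookkeeping in the bootstrap. One must make sure the trajectory stays in the region where $|g|\le 0.01A|x-\frac12|$ is valid, and that the constants ($e^{0.1}$, $0.02\cdot 1.21$) leave enough margin against $0.1A\gamma$ and $\frac{A\gamma}{2}$. I would handle this by fixing the bootstrap box $\{|\xi|\le 1.2\gamma,\ |\eta|\le0.1\gamma\}$ first and verifying every estimate inside it.
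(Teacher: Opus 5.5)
Your argument is correct, but it follows a different route from the paper. The paper first applies Gronwall to the radial quantity $\sqrt{\xi^2+\eta^2}$, treating the bounds on $g_{1,2}$ and $h_{1,2}$ as valid everywhere. This confines trajectories to the $3\gamma$-disk for $t\le\frac{1}{10A}$ and yields the uniform forcing bound $|g_i+h_i|\le 0.0602\gamma A$. Both parts then follow from the variation-of-constants formula: $|\eta(\tau)|\le|\eta(0)|e^{-A\tau}+(1-e^{-A\tau})\,0.0602\gamma$, a convex combination of two numbers below $0.1\gamma$, and $|\xi(\tau)|\ge|\xi(0)|e^{A\tau}-(e^{A\tau}-1)\,0.0602\gamma$. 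You instead run a coupled bootstrap in the box $\{|\xi|\le 1.2\gamma,\ |\eta|\le 0.1\gamma\}$ and close the $\eta$-bound with a barrier (the field points inward on $|\eta|=0.1\gamma$) rather than with Duhamel. You then get part 2 from monotonicity of $\xi$ and the linear lower bound $\dot\xi\ge 0.92A\gamma$, rather than from exponential growth.

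The paper's route is shorter, since no bootstrap is needed once the perturbation bounds are taken as global. Your route gives a better forcing constant ($0.0244$ versus $0.0602$). It also explicitly checks that trajectories stay where the expansion of $g$ is valid, which the paper never addresses. Its $3\gamma$-disk allows $|x-\frac12|$ up to $\frac{3\sqrt2\sqrt{1+A^2}}{2A}\gamma$, which exceeds $4\gamma$ for small $A$, so shrinking $\gamma$ relative to $A$ as you propose is genuinely needed. Your part 2 is also cleaner. The paper's final chain replaces $-(e^{A\tau}-1)\,0.0602\gamma$ by $-0.0602\gamma A\tau$, which goes the wrong way termwise. It is only rescued by regrouping as $e^{A\tau}(1-\frac{A\tau}{2}-0.0602)\gamma+0.0602\gamma$ before using $e^{A\tau}\ge 1+A\tau$.
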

	\begin{proof}
		We show that if $\sqrt{\xi(0)^2+\eta(0)^2} <2\gamma$, then $\sqrt{\xi(t)^2+\eta(t)^2}\leq 3\gamma$ for $t\leq \frac{1}{10A}$. Indeed, we have
		\begin{flalign*}
			\frac{d}{dt}\left( \xi(t)^2+\eta(t)^2 \right)&=2A(\xi(t)^2-\eta(t)^2)+2\xi(t)(g_1(\xi, \eta)+h_1(\xi, \eta, t))+2\eta(t)(g_2(\xi, \eta)+h_2(\xi, \eta, t)) \\&\leq 2A(\xi(t)^2+\eta(t)^2)+0.04A(\xi(t)^2+\eta(t)^2)+0.0004\sqrt{\xi(t)^2+\eta(t)^2}\gamma A.
		\end{flalign*}
		which implies 
		\begin{flalign*}
			\frac{d}{dt}\sqrt{ \xi(t)^2+\eta(t)^2}\leq 1.02A\sqrt{\xi(t)^2+\eta(t)^2}+0.0002\gamma A.
		\end{flalign*}
		Using Gronwall inequality, we see that 
		\begin{flalign*}
			\sqrt{ \xi(t)^2+\eta(t)^2}\leq \sqrt{ \xi(0)^2+\eta(0)^2}e^{1.02At}+\frac{0.0002\gamma A}{1.02A}(e^{1.02At}-1)<2\gamma e^{0.102}+\frac{0.0002\gamma}{1.02}(e^{0.102}-1) <3\gamma.
		\end{flalign*}
		Suppose now that $\vert \xi(0)\vert \leq \gamma$ and $\vert \eta(0)\vert<0.1\gamma$. Then, since
		\begin{flalign*}
			\frac{d\eta}{dt}=-A\eta+g_2(\xi, \eta)+h_2(\xi, \eta, t)
		\end{flalign*}
		and $$\vert g_2(\xi, \eta)+h_2(\xi, \eta, t) \vert\leq 0.0602\gamma A, $$
		we see that 
		\begin{flalign*}
			\vert \eta(\tau)\vert \leq \vert \eta(0)\vert e^{-A\tau}+(1-e^{-A\tau})0.0602\gamma \leq 0.1\gamma e^{-0.1}+(1-e^{-0.1})0.0602\gamma <0.1\gamma.
		\end{flalign*}
		which proves the first statement.\\
		
		Suppose that $(1-\frac{A\tau}{2})\gamma < \vert \xi(0)\vert < \gamma$ and $\vert \eta(0)\vert<0.1\gamma$.
		Since
		\begin{flalign*}
			\frac{d\xi}{dt}=A\xi+g_1(\xi, \eta)+h_1(\xi, \eta, t)
		\end{flalign*}
		and $$\vert g_1(\xi, \eta)+h_1(\xi, \eta, t) \vert\leq 0.0602\gamma A, $$
		We have
		\begin{flalign*}
			\vert \xi(\tau)\vert &\geq \vert \xi(0)\vert e^{A\tau}-\frac{e^{A\tau}-1}{A}0.0602\gamma A\geq (1-\frac{A\tau}{2})\gamma e^{A\tau}-(e^{A\tau}-1)0.0602\gamma \\&\geq (1+A\tau)(1-\frac{A\tau}{2})\gamma-0.0602\gamma A\tau=\gamma+(0.5-0.0602)\gamma A\tau-\frac{(A\tau)^2}{2}\gamma\\&
			\geq \gamma+(0.5-0.0602-0.05)\gamma A \tau >\gamma.
		\end{flalign*}
		where we have used the fact that $e^{A\tau}>1+A\tau$. This proves the second statement.
	\end{proof}
	Now, we construct the initial condition that exhibits the desired gradient growth. We contruct $f_0$ as follows:
	\begin{enumerate}
		\item $f_0$ is symmetric about $O_1=(1/2, 0)$, i.e. $f_0(1-x, -v)=f_0(x, v)$.
		\item $f_0$ is nonnegative and $\Vert f_0\Vert_{L^\infty}\leq 2\Vert f_*\Vert_{L^\infty}$.
		\item Under the coordinate transform (\ref{coordinate}), we have that $f_0=2\Vert f_*\Vert_{L^\infty}$ on the segment $\{ \eta=0, \vert \xi\vert\leq  0.992\gamma\}$ and $f_0=\Vert f_*\Vert_{L^\infty}$ on the ellipse
		\begin{flalign*}
			\left(\frac{\xi}{0.996\gamma}\right)^2+\left(\frac{\eta}{0.05\gamma}\right)^2=1.
		\end{flalign*}
		\item $f_0=f_*$ outside $U_\gamma:=\{x^2+v^2\leq \gamma^2\}$ and $\Pi_\gamma = \{ \vert\xi\vert\leq \gamma,\; \vert \eta\vert \leq 0.1\gamma \}$, where $(\xi, \eta)$ are as in $(\ref{coordinate})$.
		\item $f_0(x, v)=f_*(x, v)-\chi_\gamma(x, v)$ inside $U_\gamma$, where $\chi_\gamma\in C^\infty_c(U_\gamma)$ is positive, spherically symmetric, and such that $\int_{\mathbb{T}\times\mathbb{R}}f_0(x, v)dvdx=1$.
		\item $\int_{\mathbb{T}\times\mathbb{R}} (1+v^2)\vert f_0(x, v)-f_*(x,v) \vert dvdx<\delta$
		\item $f_0$ is smooth everywhere.
	\end{enumerate}
	One can show that such $f_0$ exists. Now, let $f$ be the solution for the Vlasov-Poisson equation with initial condition $f_0$, and $\Phi(t_1, t_2)$ be the characteristic flow from time $t_1$ to $t_2$. Then, by the above argument, we see that $\Vert E(x, t)-E_*(x) \Vert\leq 0.0001\gamma A$ for all $t\geq 0$.
	
		Having constructed the initial data, we now prove the gradient growth for such solution. Let $\{t_n\}$ be sequence of points such that $t_n\in[\frac{2n-1}{50A}, \frac{n}{25A}]$ and $\Vert \nabla f(t_n) \Vert_{L^\infty}=\min_{t\in[\frac{2n-1}{50A}, \frac{n}{25A}]}\Vert \nabla f(t) \Vert_{L^\infty}$. Note that $t_n-t_{n-1}\in [\frac{1}{50A}, \frac{3}{50A}]$. Consider the set of points inside the $f=3$ level curve at $t=0$, which is an ellipse. Let $S_0$ be the intersection of this set and $B_o=\{\vert \xi\vert \leq \gamma, \vert\eta\vert<0.1\gamma\}$. Let $S_0^1$ be the intersection of $S_0$ and $B_i=\{\vert \xi\vert \leq 0.99\gamma, \vert\eta\vert<0.1\gamma\}$, and let $S_0^2=S_0-S_0^1$. Consider the set $\Phi(0, t_1)S_0$, which will stay inside the tube $\{\vert \eta\vert <0.1\gamma \}$ by Lemma \ref{flow}. Let $S_{1}$ be the simply connected component of the set $\Phi(0, t_1)S_0\cap B_o$ containing the point $(x, v)=(1/2, 0)$. Then, we observe that the followings are true:
	\begin{enumerate}
		\item $\partial S_1$ consists of the part of the level curve $\{f(t_1, x, v)=3\}$ and the part of the vertical segments $\xi=\pm\gamma$.
		\item $\vert S_1 \vert \leq \vert S^1_0\vert =\vert S_0\vert -\vert S_0^2\vert $ since the flow carries $S_0^2$ away from $B_o$ by Lemma \ref{flow}.
		\item $S_1$ contains the part of the level curve $f(t_1)=4$ which is symmetric with respect to $O_1$ and connects the following points: the origin and $P_{\pm}^1$, where the $\xi$-coordinate of $P_{\pm}^1$ is $\pm\gamma$. This follows from the fact that edge points are carried away from $B_o$ at $t=t_1$.
		\item For the decomposition $S_1=S_1^1\cup S_1^2$ analogous to the decomposition $S_0=S_0^1\cup S_0^2$ the set $S_1^2$ has positive area.  
	\end{enumerate}
	We inductively define $S_n$ for all times $t=t_n$. All the properties above will still hold. In particular, we have
	\begin{flalign*}
		\vert S_{n+1}\vert \leq \vert S_n\vert -\vert S_n^2\vert
	\end{flalign*}
	and therefore
	\begin{flalign*}
		\sum_{n=0}^\infty \vert S_n^2\vert <\infty.
	\end{flalign*}
	Now, we estimate $\vert S_n^2\vert $. Note that $S_n^2$ is symmetric with respect to $O_1$ and therefore we only consider the right part. The sides $\xi=0.99\gamma$ and $\xi=\gamma$ contains the points of the level curve $f(t_n)=4$. By mean value theorem, we therefore have that
	\begin{flalign*}
		\vert S_n^2\vert \gtrsim \gamma^2\Vert \nabla f(t_n) \Vert_{L^\infty}^{-1}.
	\end{flalign*}
	We finally use the the following lemma, which can be found in \cite{growth_EE}.
	\begin{lemma}[Lemma 2.3 in \cite{growth_EE}]
		If $a_j>0$ and $$\sum_{j=0}^\infty a_j<\infty, $$
		then
		\begin{flalign*}
			\frac{1}{N^2}\sum_{j=0}^N a_j^{-1}\to\infty.
		\end{flalign*}
	\end{lemma}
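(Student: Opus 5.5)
The lemma is a statement about positive sequences only, so I will prove it directly with Cauchy--Schwarz. For each $N$, the Cauchy--Schwarz inequality applied to $\sum_{j=0}^N a_j^{1/2}\cdot a_j^{-1/2}$ gives
\begin{flalign*}
(N+1)^2=\left(\sum_{j=0}^N 1\right)^2\leq \left(\sum_{j=0}^N a_j\right)\left(\sum_{j=0}^N a_j^{-1}\right).
\end{flalign*}
Since $\sum_j a_j<\infty$, this already shows that $\frac{1}{N^2}\sum_{j=0}^N a_j^{-1}$ stays bounded below. It does not yet show divergence, because the full sum $\sum_{j=0}^N a_j$ does not go to zero.

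To get divergence I will apply the same inequality to a tail block. Fix $M<N$. Restricting to the indices $M<j\leq N$ gives
\begin{flalign*}
(N-M)^2\leq \left(\sum_{j=M+1}^{N} a_j\right)\left(\sum_{j=M+1}^N a_j^{-1}\right)\leq r_M\sum_{j=0}^N a_j^{-1},
\end{flalign*}
where $r_M=\sum_{j>M}a_j$ is the tail of the series. Dividing by $N^2$ and letting $N\to\infty$ with $M$ fixed, I get
\begin{flalign*}
\liminf_{N\to\infty}\frac{1}{N^2}\sum_{j=0}^N a_j^{-1}\geq \frac{1}{r_M}.
\end{flalign*}
Summability gives $r_M\to0$ as $M\to\infty$, so the $\liminf$ is $+\infty$. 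If some $r_M=0$, then $a_j=0$ for $j>M$, which contradicts $a_j>0$; so no division by zero occurs.

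The only subtle point is this passage from a bounded-below statement to a divergent one, and the tail-restriction trick settles it. I would also note how the lemma is used afterwards: set $a_n=|S_n^2|$, which has positive area by property 4, so $a_n>0$. Then $\sum a_n<\infty$ and $a_n^{-1}\lesssim \gamma^{-2}\Vert\nabla f(t_n)\Vert_{L^\infty}$. Since each $t_n$ minimizes $\Vert\nabla f\Vert_{L^\infty}$ over an interval of length $\frac{1}{50A}$, the time integral up to $T\sim N/(25A)$ dominates a constant times $\sum_{n\le N}a_n^{-1}$, which yields the theorem.
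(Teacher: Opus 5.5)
Your argument is correct. Cauchy--Schwarz on the block $M<j\le N$ gives $\liminf_{N\to\infty} N^{-2}\sum_{j=0}^N a_j^{-1}\ge 1/r_M$ for every $M$, and $r_M\to 0$ then gives divergence. The paper does not prove this lemma itself but cites Denisov's Lemma 2.3; your tail-block Cauchy--Schwarz (equivalently AM--HM) argument is the standard proof of that fact, so nothing is missing.
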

	By the above lemma, we see that
	\begin{flalign*}
		\lim_{N\to\infty} \frac{1}{N^2}\sum_{j=0}^N \Vert \nabla f(t_j)\Vert_{L^\infty}=\infty.
	\end{flalign*}
	Since
	\begin{flalign*}
		\frac{(25A)^2}{N^2}\int_{0}^{\frac{N}{25A}} \Vert\nabla f(t)\Vert_{L^\infty}dt >\frac{(25A)^2}{N^2}\sum_{j=1}^N \int_{\frac{2j-1}{50A}}^{\frac{j}{25A}} \Vert\nabla f(t)\Vert_{L^\infty}dt \geq \frac{25A}{2N^2}\sum_{j=1}^N \Vert \nabla f(t_j)\Vert_{L^\infty}
	\end{flalign*}
	we have
	\begin{flalign*}
		\lim_{T\to\infty}\frac{1}{T^2}\int_0^T \Vert \nabla f(t)\Vert_{L^\infty} dt=\infty
	\end{flalign*}
	\section*{Acknowledgement}
	
	The author would like to express his gratitude and thanks to Professor In-Jee Jeong for his guidance throughout this project. 
    
\end{document}